\newtheorem{theorem}{Theorem}[]
\newtheorem{lemma}[theorem]{Lemma}
\newtheorem{proposition}[theorem]{Proposition}
\newtheorem{definition}[theorem]{Definition}
\newtheorem{question}[theorem]{Question}
\newtheorem{corollary}[theorem]{Corollary}
\newtheorem{problem}[theorem]{Problem}
\def\to{\rightarrow}
\def\f{\mathfrak}
\def\c{\mathcal}
\def\r{\mathrm}
\def\bb{\mathbb}
\def\sl{\langle}
\def\sr{\rangle}
\begin{document}
\title[Hyperspaces associated to a noncommutative space]{Vietoris topology on hyperspaces associated to a noncommutative compact space}
\author[M. M. Sadr]{Maysam Maysami Sadr}
\address{Department of Mathematics\\
Institute for Advanced Studies in Basic Sciences\\
P.O. Box 45195-1159, Zanjan 45137-66731, Iran}
\email{sadr@iasbs.ac.ir}
\subjclass[2010]{46L05; 46L85; 54B20.}
\keywords{C*-algebra, state space, closed projection, hyperspace, Vietoris topology, Hausdorff distance, infimum distance.}
\begin{abstract}
We study some topological spaces that can be considered as hyperspaces associated to
noncommutative spaces. More precisely, for a NC compact space associated
to a unital C*-algebra, we consider the set of closed projections of the second dual of the C*-algebra
as the hyperspace of closed subsets of the NC space. We endow this hyperspace with an analog of Vietoris topology.
In the case that the NC space has a quantum metric space structure
in the sense of Rieffel we study the analogs of Hausdorff and infimum distances on the hyperspace.
We also formulate some interesting problems about distances between sub-circles of a quantum torus.
\end{abstract}
\maketitle
\section{Introduction}\label{s1}
This note is a contribution to Noncommutative Topology. We introduce and study some topological spaces that can be considered as
the hyperspaces associated to noncommutative spaces. More precisely, let $\f{q}A$ denote the (imaginary) NC compact Hausdorff space
associated to a unital C*-algebra $A$. We consider the set of nonzero closed projections of the second dual
$A^{**}$ as the hyperspace $\f{S}_\r{cl}\f{q}A$ of nonempty closed subsets of $\f{q}A$. In the case that $A$ is commutative these closed
projections are canonically identified with closed subsets of the Gelfand space of $A$.
(To the best of our knowledge the study of closed projections as closed subsets of NC spaces goes back to Akemann \cite{Akemann1,Akemann2,Akemann3}.
Closed projections have been considered also in some recent papers, see \cite{BlecherNeal2,Comman2} and references therein.)
There is a canonical bijection between closed projections in $A^{**}$ and weak*-closed faces of
the state space $\c{S}A$ of $A$ (see Section \ref{s4}). Thus we can identify the hyperspace $\f{S}_\r{clc}\c{S}A$ of such subsets of $\c{S}A$
with $\f{S}_\r{cl}\f{q}A$. We have a canonical Vietoris topology on $\f{S}_\r{clc}\c{S}A$ induced from the weak*-topology of $\c{S}A$.
In the case that $A$ is commutative it is proved in Section \ref{s2} that this Vietoris topology coincides with the vietoris topology
on the hyperspace of closed subsets of the Gelfand space of $A$. Suppose that $\f{q}A$ has a quantum metric space
structure in the sense of Rieffel \cite{Rieffel3,Rieffel1,Rieffel2}. This induces Hausdorff and infimum distances on $\f{S}_\r{cl}\f{q}A$.
Again if $A$ is commutative it is proved that these distances coincide with the usual Hausdorff and infimum distances 
(see the last paragraph of Section \ref{s4}).

The notion of \emph{quantum (or NC) metric space} have been considered by many authors, see
\cite{Rieffel2,KuperbergWeaver1,KerrLi1,GuidoIsola1,Latremoliere1,MartinettiMercatiTomassini1,Wu2,Sadr1} and references therein.
The main subjects studied in most of the mentioned papers are variations of the quantum Gromov-Hausdorff distance and quantum metric spaces
defined by Rieffel \cite{Rieffel2}. The notions introduced by Rieffel \cite{Rieffel2} are based on order unit spaces. Since our attention here
is to NC Topology we are more interested in order unit spaces arising from C*-algebras.

The plan of the paper is as follows. In Section \ref{s2} we consider some properties of hyperspaces associated to ordinary topological spaces. 
Also we consider Hausdorff and infimum distances. In Section \ref{s3} we review the notion of quantum metric space. In Section \ref{s4}
we introduce our main object $\f{S}_\r{cl}\f{q}A$, the hyperspace of closed subsets of a compact NC space.
In Section \ref{s5} we study the Vietoris topology on $\f{S}_\r{cl}\f{q}A$. In Section \ref{s6} using the infimum distance we define
an analog of Lipschitz seminorm for quantum metric spaces. At last in Section \ref{s7} we consider some questions and problems
on finite NC spaces and quantum tori.
\section{Hyperspace of closed subsets of an ordinary topological space}\label{s2}
Let $X$ be a compact Hausdorff space. We denote by $\f{S}_\r{cl}X$ the set of all nonempty closed subsets of $X$.
For every open $U\subseteq X$, let $U^-:=\{K\in\f{S}_\r{cl}X:K\cap U\neq\emptyset\}$ and $U^+:=\{K\in\f{S}_\r{cl}X:K\subseteq U\}$.
The smallest topology on $\f{S}_\r{cl}X$ containing all $U^\pm$'s is called Vietoris topology.
The space $\f{S}_\r{cl}X$ together with the Vietoris topology is called the hyperspace of closed sets in $X$.
It is easy to see (\cite[Exercise 3.12]{IllanesNadler1}) that the hyperspace is compact and Hausdorff. Also the subspace topology of $X$,
where $X$ is considered as a subspace of $\f{S}_\r{cl}X$ via the canonical embedding $x\mapsto\{x\}$,
coincides with the original topology of $X$. Let $\c{C}X$ denote the C*-algebra of complex valued continuous functions on $X$.
We always endow the state space $\c{S}\c{C}X$ of $\c{C}X$ with weak* topology. We also identify $\c{S}\c{C}X$ with the
space of Borel regular probability measures on $X$. Then the map $\delta:x\mapsto\delta_x$
is a homeomorphism from $X$ onto the space of pure states of $\c{C}X$ where $\delta_x$ denote the point mass measure concentrated at $x$.
For a nonempty closed subset $K$ of $X$ let $\c{F}_K$ denote the set of those measures $\mu$ in $\c{S}\c{C}X$ with $\r{Spt}(\mu)\subseteq K$.
Then $\c{F}_K$ is a weak*-closed face of $\c{S}\c{C}X$. Also note that $\c{F}_K$ is the weak* closed convex hull of $\{\delta_x:x\in K\}$.
\begin{proposition}\label{p0.5}
The map $\c{F}:K\mapsto\c{F}_K$ is a homeomorphism from the hyperspace $\f{S}_\r{cl}X$ onto a closed subspace of the hyperspace $\f{S}_\r{cl}\c{S}\c{C}X$.
\end{proposition}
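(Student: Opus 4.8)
The plan is to exploit the fact, recalled above, that the hyperspace of a compact Hausdorff space is itself compact Hausdorff. Since the state space $\c{S}\c{C}X$ is weak$^*$-compact and weak$^*$-Hausdorff, both $\f{S}_\r{cl}X$ and $\f{S}_\r{cl}\c{S}\c{C}X$ are compact Hausdorff. Moreover $\c{F}_K$ is a nonempty weak$^*$-closed subset of $\c{S}\c{C}X$ (nonempty because $K\neq\emptyset$), so $\c{F}$ does map $\f{S}_\r{cl}X$ into $\f{S}_\r{cl}\c{S}\c{C}X$. Recall that a continuous injection from a compact space into a Hausdorff space is automatically a homeomorphism onto its image, and that image, being a continuous image of a compact space, is compact and hence closed. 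Thus it suffices to check that $\c{F}$ is injective and continuous. Injectivity is immediate: since $\r{Spt}(\delta_x)=\{x\}$, one has $\delta_x\in\c{F}_K$ if and only if $x\in K$, so $K=\{x\in X:\delta_x\in\c{F}_K\}$ is recovered from $\c{F}_K$.

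For continuity I would factor $\c{F}$ through the pure states. Because $X$ is compact and $\delta$ is a homeomorphism onto the pure state space $P:=\delta(X)$, the set $P$ is a closed subspace of $\c{S}\c{C}X$, and $\delta$ induces a homeomorphism $\delta_*:\f{S}_\r{cl}X\to\f{S}_\r{cl}P$, $K\mapsto\delta(K)$, where $\f{S}_\r{cl}P$ carries the subspace topology from $\f{S}_\r{cl}\c{S}\c{C}X$ (this matching of the intrinsic Vietoris topology with the subspace topology is standard for closed $P$). Since $\c{F}_K=\overline{\r{conv}}(\delta(K))$, with closure taken in the weak$^*$ topology, we have $\c{F}=\Phi\circ\delta_*$, where $\Phi(C):=\overline{\r{conv}}(C)$ is the weak$^*$-closed convex hull map on $\f{S}_\r{cl}P$. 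As $\delta_*$ is a homeomorphism, it remains to prove that $\Phi$ is continuous, i.e.\ that the set-valued map $C\mapsto\overline{\r{conv}}(C)$ is both lower and upper semicontinuous for the Vietoris topologies.

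Lower semicontinuity is the easier half: given a weak$^*$-open $W$ meeting $\overline{\r{conv}}(C)$, density of finite convex combinations lets me choose $p=\sum_{i}t_i c_i\in W$ with $c_i\in C$; if $C'$ is Vietoris-close to $C$ then each $c_i$ is approximated by some $c_i'\in C'$, and since $(y_1,\dots,y_m)\mapsto\sum_i t_iy_i$ is continuous and $W$ is open, the combination $\sum_i t_ic_i'\in\r{conv}(C')\subseteq\Phi(C')$ lies in $W$. I expect the upper half to be the real obstacle. Here one is given a weak$^*$-open $W\supseteq\overline{\r{conv}}(C)$, but $W$ need not be convex, so $C\subseteq W$ alone does not control $\overline{\r{conv}}(C')$ for $C'$ near $C$. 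The remedy is the local convexity of the weak$^*$ topology: since $\overline{\r{conv}}(C)$ is compact and convex, I can interpose a convex open set $W'=\overline{\r{conv}}(C)+V$, for a suitable convex symmetric open $V\ni 0$, arranged so that $\overline{\r{conv}}(C)\subseteq W'\subseteq\overline{W'}\subseteq W$. Then for $C'$ with $C'\subseteq W'$, which holds for all $C'$ Vietoris-near $C$ by upper convergence, convexity of $W'$ gives $\r{conv}(C')\subseteq W'$ and therefore $\Phi(C')=\overline{\r{conv}(C')}\subseteq\overline{W'}\subseteq W$. This yields upper semicontinuity, and combining the two halves shows that $\Phi$, hence $\c{F}$, is continuous, which completes the argument together with the compactness reduction of the first paragraph.
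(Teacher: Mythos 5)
Your proof is correct and follows essentially the same route as the paper's: the same compactness reduction (a continuous injection from a compact space into a Hausdorff space is a homeomorphism onto a closed image), the same use of local convexity (Rudin's separation theorem) to interpose a convex open set for the $U^+$/upper-semicontinuity half, and the same approximation by finite convex combinations of point masses, together with continuity of $(y_1,\dots,y_m)\mapsto\sum_i t_iy_i$, for the $V^-$/lower-semicontinuity half. Your factorization $\c{F}=\Phi\circ\delta_*$ through the pure-state space and the closed-convex-hull map is only an organizational repackaging of the paper's direct verification on the subbasic open sets.
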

\begin{proof}
Since both of the hyperspaces are compact Hausdorff spaces and $\c{F}$ is injective it is enough to show that $\c{F}$ is continuous.
Let $U,V$ be arbitrary open subsets of $\c{S}\c{C}X$. We must show that $\c{F}^{-1}(U^+)$ and $\c{F}^{-1}(V^-)$ are open in $\f{S}_\r{cl}X$.
Suppose that $K\in\c{F}^{-1}(U^+)$. Thus $\c{F}_K\subseteq U$. Since $\c{F}_K$ is convex it follows from \cite[Theorem 1.10]{Rudin1}
that there is a convex open subset $U_0$ of $\c{S}\c{C}X$ with $\c{F}_K\subseteq U_0\subseteq\overline{U_0}\subseteq U$.
We have $K\in(\delta^{-1}(U_0))^+\subseteq\c{F}^{-1}(U^+)$. Thus $\c{F}^{-1}(U^+)$ is open. Now suppose that $K\in\c{F}^{-1}(V^-)$.
Thus there are $\mu\in\c{F}_K\setminus V^\r{c}$ and open subset $W$ of $\c{S}\c{C}X$ such that $\mu\in W$ and $W\cap V^C=\emptyset$.
It follows that there exist $x_1,\ldots,x_n\in\r{Spt}\mu\subseteq K$, $t_1,\ldots,t_n>0$ with $\sum_{i=1}^nt_i=1$, and an open subset
$W_0$ of $\c{S}\c{C}X$, such that $\sum_{i=1}^nt_i\delta_{x_i}\in W_0\subseteq W$. Thus there are open subsets $O_1,\ldots,O_n$ of $X$
with $x_i\in O_i$, and with the property that if $y_i\in O_i$ then $\sum_{i=1}^nt_i\delta_{y_i}\in W_0$. We have
$K\in\cap_{i=1}^nO_i^-\subseteq\c{F}^{-1}(V^-)$. Thus $\c{F}^{-1}(V^-)$ is open. This completes the proof.
\end{proof}
Now suppose that $X$ is metrizable and let $d$ be a compatible metric on $X$.
The Hausdorff distance $\c{H}_d$ (associated to $d$) on $\f{S}_\r{cl}X$ is defined by
\begin{align*}
\c{H}_d(K,K')=\inf\{r>0:K\subseteq\r{Ball}(K',r),K'\subseteq\r{Ball}(K,r)\}\quad(K,K'\in\f{S}_\r{cl}X),
\end{align*}
where $\r{Ball}(K,r)=\{y\in X:d(x,y)<r,\exists x\in K\}$. It is well known that $\c{H}_d$ is a metric and the topology induced by $\c{H}_d$
coincides with the Vietoris topology (\cite[Theorem 3.1]{IllanesNadler1}). Also the mapping $x\mapsto\{x\}$ is an isometric embedding of $X$
into $\f{S}_\r{cl}X$. The Lipschitz seminorm $\c{L}_d$ for (self-adjoint) elements of $\c{C}X$ is defined by
\begin{align}\label{f2}
\c{L}_d(f):=\sup\{\frac{|f(x)-f(y)|}{d(x,y)}:x,y\in X, x\neq y\}\quad(f\in\c{C}X_\r{sa}).
\end{align}
This seminorm satisfies the Leibniz inequality: $\c{L}_d(fg)\leq\c{L}_d(f)\|g\|_\infty+\|f\|_\infty\c{L}_d(g)$.
The Lipschitz algebra of $(X,d)$ is defined by $\r{Lip}_dX:=\{f\in\c{C}X_\r{sa}:\c{L}_d(f)<\infty\}$. This is a real uniformly-dense subalgebra
of $\c{C}X_\r{sa}$. (For an extensive account on Lipschitz algebras see \cite{Weaver1}.)
The Monge-Kantorovich distance is defined by
\begin{align}\label{f3}
\rho_d(\mu,\nu):=\sup\{|\mu(f)-\nu(f)|:\c{L}_d(f)\leq1\}\quad(\mu,\nu\in\c{S}\c{C}X)
\end{align}
It is well known that the topology of $\rho_d$ coincides with weak* topology and also the restriction of $\rho_d$ to the space of pure states of
$\c{C}X$ is equal to $d$ where the pure state space is canonically identified with $X$. The metric version of Proposition \ref{p0.5} is as follows.
\begin{proposition}\label{p1}
$K\mapsto\c{F}_K$ is an isometric embedding from $(\f{S}_\r{cl}X,\c{H}_d)$ into $(\f{S}_\r{cl}\c{S}\c{C}X,\c{H}_{\rho_d})$.
\end{proposition}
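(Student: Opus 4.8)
The plan is to establish the two inequalities $\c{H}_{\rho_d}(\c{F}_K,\c{F}_{K'})\leq\c{H}_d(K,K')$ and $\c{H}_{\rho_d}(\c{F}_K,\c{F}_{K'})\geq\c{H}_d(K,K')$ separately, working throughout with the standard reformulation of the Hausdorff distance as a maximum of two one-sided gaps. Writing $r_1:=\sup_{x\in K}d(x,K')$ and $r_2:=\sup_{y\in K'}d(y,K)$, compactness makes both suprema attained and $\c{H}_d(K,K')=\max\{r_1,r_2\}$; likewise $\c{H}_{\rho_d}(\c{F}_K,\c{F}_{K'})=\max\{\sup_{\mu\in\c{F}_K}\inf_{\nu\in\c{F}_{K'}}\rho_d(\mu,\nu),\ \sup_{\nu\in\c{F}_{K'}}\inf_{\mu\in\c{F}_K}\rho_d(\mu,\nu)\}$. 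By symmetry it then suffices to handle one side of each inequality.

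For the lower bound I would exploit the point masses sitting inside $\c{F}_K$ together with a single well-chosen Lipschitz test function. Pick $x_0\in K$ attaining $d(x_0,K')=r_1$ and consider $g:=d(\cdot,K')\in\c{C}X_\r{sa}$, which is $1$-Lipschitz so $\c{L}_d(g)\leq1$ and which vanishes identically on $K'$. Then $\delta_{x_0}\in\c{F}_K$, while every $\nu\in\c{F}_{K'}$ has $\r{Spt}(\nu)\subseteq K'$ and hence $\nu(g)=0$; plugging $g$ into the definition (\ref{f3}) of $\rho_d$ gives $\rho_d(\delta_{x_0},\nu)\geq|g(x_0)-\nu(g)|=r_1$ for every such $\nu$, so the first one-sided gap is at least $r_1$. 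The symmetric test function $d(\cdot,K)$ bounds the second one-sided gap below by $r_2$, and taking the maximum yields $\c{H}_{\rho_d}(\c{F}_K,\c{F}_{K'})\geq\max\{r_1,r_2\}=\c{H}_d(K,K')$.

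For the upper bound, given $\mu\in\c{F}_K$ I must produce $\nu\in\c{F}_{K'}$ with $\rho_d(\mu,\nu)$ close to $\c{H}_d(K,K')$. The clean case is when $\mu=\sum_{i=1}^nt_i\delta_{x_i}$ is finitely supported with $x_i\in K$: choosing for each $i$ a point $y_i\in K'$ with $d(x_i,y_i)=d(x_i,K')\leq r_1$ and setting $\nu:=\sum_{i=1}^nt_i\delta_{y_i}\in\c{F}_{K'}$, one has for every $f$ with $\c{L}_d(f)\leq1$ the estimate $|\mu(f)-\nu(f)|\leq\sum_{i=1}^nt_i|f(x_i)-f(y_i)|\leq\sum_{i=1}^nt_id(x_i,y_i)\leq r_1$, so that $\rho_d(\mu,\nu)\leq r_1\leq\c{H}_d(K,K')$.

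The step I expect to be the main obstacle is passing from finitely supported measures to an arbitrary $\mu\in\c{F}_K$. Here I would invoke that the finitely supported probability measures on $K$ are weak*-dense in $\c{F}_K$ and that, since $X$ is compact, the weak* topology on $\c{S}\c{C}X$ coincides with the $\rho_d$-topology. Thus I can take finitely supported $\mu_m\to\mu$ in $\rho_d$, match each $\mu_m$ with $\nu_m\in\c{F}_{K'}$ as above satisfying $\rho_d(\mu_m,\nu_m)\leq r_1$, and conclude $\inf_{\nu\in\c{F}_{K'}}\rho_d(\mu,\nu)\leq\rho_d(\mu,\mu_m)+r_1\to r_1$. (An alternative avoiding approximation is to build a Borel selection $x\mapsto\phi(x)\in K'$ with $d(x,\phi(x))\leq r_1$ and push $\mu$ forward along it, but the density argument is more self-contained.) This controls the first one-sided gap by $r_1$; the symmetric argument controls the second by $r_2$, giving $\c{H}_{\rho_d}(\c{F}_K,\c{F}_{K'})\leq\max\{r_1,r_2\}=\c{H}_d(K,K')$ and completing the proof.
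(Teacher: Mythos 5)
Your proof is correct, and it is half the paper's argument and half a genuinely different one. For the inequality $\c{H}_{\rho_d}(\c{F}_K,\c{F}_{K'})\leq\c{H}_d(K,K')$ you proceed exactly as the paper does: match the atoms of a finitely supported $\mu\in\c{F}_K$ with nearby points of $K'$, estimate $|\mu(f)-\nu(f)|$ over $1$-Lipschitz $f$, and then pass to arbitrary $\mu$ using weak*-density of finitely supported measures in $\c{F}_K$ plus the coincidence of the weak* and $\rho_d$ topologies; the paper leaves that last density step implicit (it is the unexplained jump from finitely supported measures to ``$\c{F}_K\subseteq\r{Ball}(\c{F}_{K'},r+\epsilon)$''), whereas you spell it out, which is a genuine improvement in completeness. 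For the reverse inequality your route differs. The paper fixes $x\in K$, uses the density of finitely supported measures a second time to find $\sum_i t_i\delta_{z_i}\in\c{F}_{K'}$ with $\rho_d(\delta_x,\sum_i t_i\delta_{z_i})<s$, and applies the test function $y\mapsto d(x,y)$ to extract one $z_{i_0}$ with $d(x,z_{i_0})<s$. You instead use the single test function $g=d(\cdot,K')$, which is $1$-Lipschitz and vanishes identically on $K'$, so that $\nu(g)=0$ for \emph{every} $\nu\in\c{F}_{K'}$ by the support condition, giving $\rho_d(\delta_{x_0},\nu)\geq g(x_0)=r_1$ against arbitrary $\nu$ with no approximation whatsoever. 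This buys a shorter, approximation-free lower bound stated directly in terms of attained one-sided gaps, rather than the paper's $\epsilon$/strict-inequality bookkeeping; the cost is only that you must observe in advance that $\c{F}_K$, $\c{F}_{K'}$ are $\rho_d$-compact so the suprema and infima are attained, which follows from weak*-compactness and the equivalence of topologies you already invoke.
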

\begin{proof}
Let $h$ denote the Hausdorff distance of $\c{F}_K$ and $\c{F}_{K'}$. Suppose that $\c{H}_d(K,K')<r$. Then for every $x\in K$ there is $y\in K'$
such that $d(x,y)<r$. Let $t_1,\ldots,t_n\geq0$ with $\sum_{i=1}^nt_i=1$ and let $x_1,\ldots,x_n\in K$. Then it is easily verified that
$\rho_d(\sum_{i=1}^nt_i\delta_{x_i},\sum_{i=1}^nt_i\delta_{y_i})<r$ where $y_i\in K'$ is such that $d(x_i,y_i)<r$. This shows that
$\c{F}_K\subseteq\r{Ball}(\c{F}_{K'},r+\epsilon)$ for every $\epsilon>0$. Similarly, we have $\c{F}_{K'}\subseteq\r{Ball}(\c{F}_K,r+\epsilon)$.
Thus $h\leq r$, and hence $h\leq\c{H}_d(K,K')$. Now suppose that $h<s$. Let $x\in K$. Then there are $z_1,\ldots,z_n\in K'$ and
$t_1,\ldots,t_n\geq0$ with $\sum_{i=1}^nt_i=1$ and $\rho_d(\delta_x,\sum_{i=1}^nt_i\delta_{z_i})<s$. Let $f$ be the function on $X$ defined by
$y\mapsto d(x,y)$. Then $\c{L}_d(f)=1$ (if $X$ at least has two points). We have
$$\sum_{i=1}^nt_id(x,z_i)=|\delta_x(f)-(\sum_{i=1}^nt_i\delta_{z_i})(f)|<s.$$
Thus $d(x,z_{i_0})<s$ for some $i_0$. This shows that $K\subseteq\r{Ball}(K',s)$. Similarly we have $K'\subseteq\r{Ball}(K,s)$. Thus  $\c{H}_d(K,K')\leq s$,
and hence $\c{H}_d(K,K')\leq h$. The proof is complete.
\end{proof}
For two subsets $K,K'$ of $X$ their infimum distance is defined by
\begin{align*}
\c{I}_d(K,K'):=\inf\{d(x,y):x\in K,y\in K'\}.
\end{align*}
In the case that $K$ or $K'$ is empty we let $\c{I}_d(K,K')=\infty$. Note also that in general $\c{I}_d$ is not a metric on $\f{S}_\r{cl}X$.
\begin{proposition}\label{p2}
Let $K,K'\in\f{S}_\r{cl}X$. Then $\c{I}_d(K,K')=\c{I}_{\rho_d}(\c{F}_K,\c{F}_{K'})$.
\end{proposition}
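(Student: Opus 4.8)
The plan is to establish the two inequalities $\c{I}_{\rho_d}(\c{F}_K,\c{F}_{K'})\leq\c{I}_d(K,K')$ and $\c{I}_d(K,K')\leq\c{I}_{\rho_d}(\c{F}_K,\c{F}_{K'})$ separately. The first is immediate: for any $x\in K$ and $y\in K'$ the point masses $\delta_x$ and $\delta_y$ lie in $\c{F}_K$ and $\c{F}_{K'}$ respectively, and since the restriction of $\rho_d$ to pure states equals $d$ we have $\rho_d(\delta_x,\delta_y)=d(x,y)$. Taking the infimum over all such $x,y$ gives $\c{I}_{\rho_d}(\c{F}_K,\c{F}_{K'})\leq\c{I}_d(K,K')$.

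For the reverse inequality, write $r:=\c{I}_d(K,K')$; I would show that $\rho_d(\mu,\nu)\geq r$ for every $\mu\in\c{F}_K$ and $\nu\in\c{F}_{K'}$. The essential step is to exhibit a single test function separating the two faces. I would take $f\in\c{C}X_\r{sa}$ to be the distance function to $K$, namely $f(z):=\inf\{d(z,x):x\in K\}$. By the triangle inequality $|f(z)-f(w)|\leq d(z,w)$, so $\c{L}_d(f)\leq1$ and $f$ is admissible in the definition (\ref{f3}) of $\rho_d$.

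Next I would evaluate $f$ against the two measures. Since $\mu\in\c{F}_K$ has $\r{Spt}(\mu)\subseteq K$ and $f$ vanishes identically on $K$, we get $\mu(f)=0$. On the other hand, for every $z\in K'$ we have $f(z)=d(z,K)\geq\c{I}_d(K,K')=r$; since $\nu\in\c{F}_{K'}$ is a probability measure supported in $K'$, it follows that $\nu(f)\geq r$. Hence $\rho_d(\mu,\nu)\geq|\mu(f)-\nu(f)|=\nu(f)\geq r$. Taking the infimum over $\mu$ and $\nu$ yields $\c{I}_{\rho_d}(\c{F}_K,\c{F}_{K'})\geq r$, which together with the first inequality completes the proof.

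The only genuinely delicate point is the choice of test function, and here the distance-to-$K$ function does all the work at once: its $1$-Lipschitz property certifies admissibility, its vanishing on $K$ kills the $\mu$-integral, and its lower bound $r$ on $K'$ controls the $\nu$-integral. I do not anticipate any obstacle beyond verifying these three elementary properties; in particular the degenerate case $K\cap K'\neq\emptyset$, where $r=0$, is handled uniformly by the same argument.
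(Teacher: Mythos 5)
Your proposal is correct, and for the nontrivial inequality it takes a genuinely different and in fact more direct route than the paper. The paper first replaces arbitrary $\mu\in\c{F}_K$, $\nu\in\c{F}_{K'}$ by finitely supported measures $\sum_i t_i\delta_{x_i}$, $\sum_j s_j\delta_{y_j}$ with $\rho_d(\mu,\nu)<r$ (implicitly using that $\c{F}_K$ is the weak*-closed convex hull of $\{\delta_x:x\in K\}$ and that $\rho_d$ metrizes the weak* topology), then tests against the distance function to the \emph{finite} set $\{y_1,\ldots,y_m\}$ and finishes with a pigeonhole argument: some $f(x_{i_0})<r$, hence some $d(x_{i_0},y_{j_0})<r$, so $\c{I}_d(K,K')<r$. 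You instead test an \emph{arbitrary} pair $\mu\in\c{F}_K$, $\nu\in\c{F}_{K'}$ against the single function $f(z)=d(z,K)$, using only the support characterization of the faces: $\mu(f)=0$ because $f$ vanishes on $K\supseteq\r{Spt}(\mu)$, and $\nu(f)\geq\c{I}_d(K,K')$ because $f\geq\c{I}_d(K,K')$ on $K'\supseteq\r{Spt}(\nu)$, whence $\rho_d(\mu,\nu)\geq\c{I}_d(K,K')$. This avoids the approximation step entirely (so you never need the density of finitely supported measures nor the fact that $\rho_d$ induces the weak* topology), handles all measures uniformly, and proves the lower bound $\c{I}_{\rho_d}(\c{F}_K,\c{F}_{K'})\geq\c{I}_d(K,K')$ directly rather than by the contrapositive-style estimate of the paper; the paper's version, in exchange, only ever evaluates test functions at finitely many points, keeping the argument at the level of convex combinations of point masses. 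Both proofs rest on the same underlying principle that distance-to-a-set functions are extremal $1$-Lipschitz functions, so the difference is one of economy rather than of substance, but your execution is cleaner.
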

\begin{proof}
Let $I$ denote the infimum distance of $\c{F}_K$ and $\c{F}_{K'}$.
For $x\in K,y\in K'$ we have $\delta_x\in\c{F}_K,\delta_y\in\c{F}_{K'}$ and $d(x,y)=\rho_d(\delta_x,\delta_y)$. Thus $I\leq\c{I}_d(K,K')$.
Let $I<r$. There are $\mu:=\sum_{i=1}^nt_i\delta_{x_i}\in\c{F}_K$ and $\nu:=\sum_{j=1}^ms_j\delta_{y_j}\in\c{F}_{K'}$ such that $\rho_d(\mu,\nu)<r$.
Let the function $f$ on $X$ be defined by $x\mapsto\c{I}_d(\{x\},\{y_1,\ldots,y_n\})$. Then $\c{L}_d(f)=1$, and we have
$$\sum_{i=1}^nt_if(x_i)=|\mu(f)-\nu(f)|<r.$$
Thus $f(x_{i_0})<r$ for some $i_0$, and hence there is $j_0$ such that $d(x_{i_0},y_{j_0})<r$. This shows that $\c{I}_d(K,K')<r$.
Since $r>I$ is arbitrary we conclude that $\c{I}_d(K,K')\leq I$.
\end{proof}
It is well known that
\begin{align}\label{f5}
\c{L}_d(f)=\sup\{\frac{|\mu(f)-\nu(f)|}{\rho_d(\mu,\nu)}:\mu,\nu\in\c{S}\c{C}X, \mu\neq \nu\}\quad(f\in\c{C}X_\r{sa}).
\end{align}
Also the following formula follows from (\ref{f2}) and Proposition \ref{p2}.
\begin{align}\label{f6}
\c{L}_d(f)=\sup_{\lambda<\lambda'\in\mathbb{R}}\frac{\lambda'-\lambda}{\c{I}_{d}(f^{-1}\lambda',f^{-1}\lambda)}
=\sup_{\lambda<\lambda'\in\mathbb{R}}\frac{\lambda'-\lambda}{\c{I}_{\rho_d}(\c{F}_{f^{-1}\lambda'},\c{F}_{f^{-1}\lambda})}
\quad(f\in\c{C}X_\r{sa}).
\end{align}
\section{Compact quantum metric spaces}\label{s3}
For the theory of order unit spaces we refer the reader to \cite{Alfsen1}. We denote the state space of an order unit space $B$ by $\c{S}B$.
This space is always considered with the weak* topology.
Let $A$ be a unital C*-algebra with the self-adjoint part $A_\r{sa}$ and state space $\c{S}A$.
Suppose that $B$ is any real linear subspace of $A_\r{sa}$ that contains $1_A$.
Then $B$ together with the usual partial ordering between self-adjoint elements and $1_A$ as order unit
becomes an order unit space. Moreover if $B$ is dense in $A_\r{sa}$ (with the norm topology) then the mapping $\mu\mapsto\mu|_B$
defines an affine homeomorphism from $\c{S}A$ with the weak* topology onto $\c{S}B$.

It is clear that the state space of an order unit space with weak* topology is a compact convex subset of a locally convex Hausdorff space.
The converse of this fact is also well known (see the details after Corollary II.2.3 of \cite{Alfsen1}).
Indeed, let $E$ be a compact convex subset of a locally convex Hausdorff space;
if $\c{A}E$ denotes the order unit space of all continuous affine real valued functions on $E$ with the constant function
$1_E$ as order unit, then $E$ and $\c{S}\c{A}E$ are affinely  homeomorphic via the map $x\mapsto(f\mapsto f(x))$ ($x\in E$).
Thus there is no difference that we formulate our results in terms of order unit spaces or else using compact convex sets.

Let $B$ be an order unit space and $L$ be a seminorm on $B$. By analogy with Formula (\ref{f3}) we define a pseudo-metric on $\c{S}B$ as follows.
\begin{align}\label{f4}
\rho_L(\mu,\nu):=\sup\{|\mu(b)-\nu(b)|:L(b)\leq1\}\quad(\mu,\nu\in\c{S}B)
\end{align}
Note that in general $\rho_L$ does not separate the points and may take value $+\infty$.
\begin{definition}
(\cite{Rieffel3,Rieffel1,Rieffel2})
Let $B$ be an order unit space with order unit $e$ and let $L$ be a seminorm on $B$
satisfying the following two conditions:
\begin{enumerate}
\item[i)] $L(b)=0$ if and only if $b=\lambda e$ for some $\lambda\in\mathbb{R}$.
\item[ii)] The topology induced by $\rho_L$, given by Formula (\ref{f4}), coincides with the weak*-topology on $\c{S}B$.
\end{enumerate}
Then the pair $(B,L)$ is called a compact quantum metric space. Also if a unital C*-algebra $A$ is given such that $B$
as an order unit space is a subspace of $A_\r{sa}$ containing $1_A$, and $B$ is dense in $A_\r{sa}$ w.r.t. the C*-norm, 
then $(A,B,L)$ is called a C*-algebraic compact quantum metric space. In the case that $(B,L)$ is understood 
we say that $\f{q}A$ is a C*-algebraic quantum metric space.
\end{definition}
Let $(X,d)$ be an ordinary compact metric space. Then $(\c{C}X,\r{Lip}_dX,\c{L}_d)$
is a C*-algebraic compact quantum metric space. Also by (\ref{f3}) and (\ref{f4}) we have $\rho_{\c{L}_d}=\rho_d$.
Thus the structure of $(X,d)$ is completely recovered by $(\c{C}X,\r{Lip}_dX,\c{L}_d)$.
We remark that there are examples of C*-algebraic compact quantum metric spaces
$(A,B,L)$ with $A=\c{C}X$ for a compact space $X$ such that $L$ does not arise from any ordinary metric $d$
on $X$ i.e. $L\neq\c{L}_d$, see \cite[Example 7.1 and Theorem 8.1]{Rieffel3}. For other examples of nonclassical quantum metric spaces we refer the reader
to the list of papers in Introduction.
\section{Hyperspace of closed sets in a NC space}\label{s4}
Let $A$ be a unital C*-algebra with the state space $\c{S}A$.
Let $A''$ denote the second commutant of $A$ in the universal representation of $A$.
By the Sherman Theorem the second dual $A^{**}$ is canonically isomorphic to the von Neumann algebra $A''$ where
$A^{**}$ is considered as a C*-algebra with the Arens product. A projection $p\in A^{**}$ is called closed \cite{Akemann1}
if there is a decreasing net of positive elements of $A$ that converges to $p$ in the weak* topology. A projection $q$ is called open if
$1-q$ is closed. For every projection $p\in A^{**}$ we let $\c{F}_p:=\{\mu\in \c{S}A:\sl \mu,p\sr=1\}$.
In the case that $A=\c{C}X$ for a compact Hausdorff space $X$, there is a bijection $K\mapsto p_K$ between closed subsets $K$ of $X$
and closed projections of $A^{**}$ such that $\c{F}_K=\c{F}_{p_K}$ with the notations of Section \ref{s2}.
\begin{proposition}\label{p3}
The assignment $p\mapsto\c{F}_p$ is a bijection between closed projections of $A^{**}$ and weak*-closed faces of $\c{S}A$.
\end{proposition}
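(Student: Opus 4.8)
The plan is to split the statement into three tasks: showing that $\c{F}_p$ is a weak*-closed face whenever $p$ is closed (so the map lands where claimed), injectivity, and surjectivity. Throughout I identify $\c{S}A$ with the set of normal states of the von Neumann algebra $A^{**}\cong A''$, so that $\sl\mu,p\sr$ is meaningful for a projection $p\in A^{**}$ and weak* convergence in $A^{**}$ tested against a fixed $\mu\in\c{S}A$ is just evaluation of $\mu$. For well-definedness, note first that for an arbitrary projection $p$ the set $\c{F}_p=\{\mu:\sl\mu,p\sr=1\}$ is convex, and is a face because $0\le p\le1$: if $\mu=t\mu_1+(1-t)\mu_2$ with $t\in(0,1)$ satisfies $\sl\mu,p\sr=1$, then $\sl\mu_i,p\sr\le1$ forces $\sl\mu_1,p\sr=\sl\mu_2,p\sr=1$. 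If in addition $p$ is closed, choose a decreasing net $(a_\alpha)$ in $A^+$ with $a_\alpha\downarrow p$ weak*. For $\mu\in\c{S}A$ we have $\sl\mu,a_\alpha\sr\ge\sl\mu,p\sr$ and $\sl\mu,a_\alpha\sr\to\sl\mu,p\sr$, so $\sl\mu,p\sr=1$ holds if and only if $\sl\mu,a_\alpha\sr\ge1$ for every $\alpha$. Hence
\begin{align*}
\c{F}_p=\bigcap_\alpha\{\mu\in\c{S}A:\sl\mu,a_\alpha\sr\ge1\}
\end{align*}
is an intersection of weak*-closed sets (each $\mu\mapsto\sl\mu,a_\alpha\sr$ is weak*-continuous, as $a_\alpha\in A$), hence weak*-closed.

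Injectivity I would obtain from support projections. Every $\mu\in\c{S}A$ has a support projection $s(\mu)\in A^{**}$, the least projection with $\sl\mu,s(\mu)\sr=1$, and for any projection $p$ one has $\sl\mu,p\sr=1$ exactly when $s(\mu)\le p$. Thus $\c{F}_p=\{\mu:s(\mu)\le p\}$, and taking the supremum in the projection lattice of $A^{**}$ recovers $p=\sup\{s(\mu):\mu\in\c{F}_p\}$. Consequently $\c{F}_p$ determines $p$, so the map is injective (this argument applies to all projections, not just closed ones).

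Surjectivity is the main obstacle. Let $F$ be a weak*-closed face of $\c{S}A$; since $A$ is unital, $\c{S}A$ is weak*-compact, so $F$ is weak*-compact and a fortiori norm-closed. Put $p:=\sup\{s(\mu):\mu\in F\}$; then $F\subseteq\c{F}_p$ is immediate. To get the reverse inclusion $\c{F}_p\subseteq F$ I would invoke the standard correspondence for the von Neumann algebra $A^{**}$, namely that $q\mapsto\c{F}_q$ is a bijection between projections and norm-closed faces of the normal state space $\c{S}A$; since $F$ is a norm-closed face with $\sup\{s(\mu):\mu\in F\}=p$, this identifies $F$ with $\c{F}_p$. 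What remains, and what carries the real content, is to prove that this $p$ is closed, equivalently that $1-p$ is open. This is precisely the point where the weak*-closedness of $F$, as opposed to mere norm-closedness, must be used, and it is in essence Akemann's theorem.

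For that last step I would realize $1-p$ as the open projection of a hereditary C*-subalgebra of $A$. Consider the norm-closed left ideal $L=\{a\in A:\sl\mu,a^*a\sr=0\text{ for all }\mu\in F\}$ and let $q$ be the supremum of the range projections of the positive elements of the hereditary subalgebra $L\cap L^*$. For positive $b\in L\cap L^*$ one checks $\sl\mu,b\sr=0$ for every $\mu\in F$, whence $bs(\mu)=0$ and the range projection of $b$ is dominated by $1-s(\mu)$; taking suprema gives $q\le\inf_\mu(1-s(\mu))=1-p$. The reverse inequality $1-p\le q$ is the crux: were it to fail, the weak*-compactness of $F$ together with a Hahn--Banach separation in $A_\r{sa}$ with the weak* topology would produce a positive element of $A$ annihilated by $F$ whose support is not dominated by $q$, contradicting the definition of $q$. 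Hence $q=1-p$ is open, $p$ is closed, and $\c{F}_p=F$. As a consistency check, the commutative identity $\c{F}_K=\c{F}_{p_K}$ recalled just before the proposition recovers the classical bijection between closed subsets of $X$ and these faces.
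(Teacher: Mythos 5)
Your first two steps are sound and complete: the verification that $\c{F}_p$ is a weak*-closed face when $p$ is closed (using $a_\alpha\geq p$ for a decreasing net and weak*-continuity of states of $A$ on $A^{**}$), and the injectivity argument via support projections, both work as written. Note for comparison that the paper does not prove this proposition at all; it cites Pedersen's book (Theorems 3.6.11 and 3.10.7) and Akemann--Pedersen (Theorem 2.5), so you are attempting strictly more than the text does.

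The genuine gap is in surjectivity, exactly at the point you yourself call the crux. Granting the von Neumann algebra correspondence between projections and norm-closed faces of the normal state space (a legitimate citation, on a par with the paper's own practice), everything reduces to showing that $p:=\sup\{s(\mu):\mu\in F\}$ is closed, and for this you offer only the sentence that a Hahn--Banach separation ``would produce a positive element of $A$ annihilated by $F$ whose support is not dominated by $q$.'' This cannot be right as stated: any positive $b\in A$ with $\nu(b)=0$ for all $\nu\in F$ satisfies $\nu(b^*b)\leq\|b\|\nu(b)=0$, hence lies in $L\cap L^*$, hence has range projection dominated by $q$ by the very definition of $q$. The object you propose to construct cannot exist under any hypothesis, so no argument can produce it; the reductio is aimed at the wrong target. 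What is actually needed is: every state $\mu$ with $\sl\mu,q\sr=0$ belongs to $F$ (equivalently, for $\mu\notin F$ there exists $b\in(L\cap L^*)^+$ with $\mu(b)>0$, which contradicts $\sl\mu,q\sr=0$ because $b\leq\|b\|q$). The passage from the separating functional to such a $b$ is the real work: separating the weak*-compact convex set $F$ from $\mu$ gives $a\in A_\r{sa}$ and $\alpha$ with $\nu(a)\leq\alpha<\mu(a)$ for all $\nu\in F$, but the naive candidate $(a-\alpha)_+$ need not vanish on $F$; converting ``bounded by $\alpha$ on $F$'' into ``vanishing on $F$'' uses the face property of $F$ (hereditariness of the associated cone of positive functionals) in an essential way, and this conversion is precisely the content of the Effros/Akemann--Pedersen theorem that the paper cites. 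Either carry out that argument (for instance via the correspondence between closed left ideals of $A$ and weak*-closed invariant faces of the quasi-state space) or cite it; as written, your proof is incomplete at its decisive step.
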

\begin{proof}
It follows from \cite[Theorems 3.6.11 and 3.10.7]{Pedersen1} or \cite[Theorem 2.5]{AkemannPedersen1}.
\end{proof}
Let $E$ be a compact convex subset of a locally convex Hausdorff space.
We denote by $\f{S}_\r{clc}E$ the set of nonempty closed convex subsets of $E$, and by $\f{S}_\r{clf}E$ the set of all closed faces of $E$.
Thus we have the chain $\f{S}_\r{clf}E\subset\f{S}_\r{clc}E\subset\f{S}_\r{cl}E$ of Hyperspaces. Throughout the paper these hyperspaces
are endowed with Vietoris topology.

Let $B$ be an order unit space. In \cite{Rieffel2} (imaginary) closed subsets of the quantum space $\f{q}B$
are identified with elements of $\f{S}_\r{clc}\c{S}B$. As we saw above it is more natural to consider the closed subsets as elements of
$\f{S}_\r{clf}\c{S}B$. So by analogy with the notations of
Section \ref{s2} we would use the symbol $\f{S}_\r{cl}\f{q}B$ instead of $\f{S}_\r{clf}\c{S}B$. Analogously, for a unital C*-algebra $A$
we let $\f{S}_\r{cl}\f{q}A:=\f{S}_\r{clf}\c{S}A$. In the case that $A=\c{C}X$ for a compact Hausdorff space $X$ it follows from
Proposition \ref{p0.5} that $\f{S}_\r{cl}\f{q}A$ is homeomorphic to $\f{S}_\r{cl}X$. Suppose that $X$ has a compatible metric $d$ and
consider the C*-algebraic quantum metric space $(\c{C}X,\r{Lip}_dX,\c{L}_d)$. Let $\rho:=\rho_{\c{L}_d}=\rho_d$.
It follows from Proposition \ref{p1} that the metric spaces $(\f{S}_\r{cl}\f{q}A,\c{H}_\rho)$ and $(\f{S}_\r{cl}X,\c{H}_d)$
are isometrically isomorphic. Also it follows from Proposition \ref{p2} that the distance functions $\c{I}_\rho$ on $\f{S}_\r{cl}\f{q}A$
and $\c{I}_d$ on $\f{S}_\r{cl}X$ coincide when the two spaces are considered canonically identical.
\section{Vietoris topology}\label{s5}
Throughout this section $E$ denotes a compact convex subset of a locally convex Hausdorff space.
The following result stated as Theorem \ref{t1} is very well known, at least in the case that $E$ is metrizable;
but we did not find in literatures any proof for the general case; however its proof is easy and based
on the following lemma. (Let $\Lambda,\Lambda'$ be directed sets and $(x_\lambda)_{\lambda\in\Lambda}$ be a net in $X$.
Let $f:\Lambda'\to\Lambda$ be an order preserving function such that $\forall\lambda\in\Lambda,\exists\lambda'\in\Lambda':f(\lambda')\geq\lambda$.
Then the net $(x_{f(\lambda')})_{\lambda'\in\Lambda'}$ is called a subnet of $(x_\lambda)_{\lambda\in\Lambda}$.)
\begin{lemma}\label{l1}
Let $X$ be a compact Hausdorff space and $(K_\lambda)_\lambda$ a net in $\f{S}_\r{cl}X$ converging to $K$.
\begin{enumerate}
\item[(i)] If $(x_\lambda)_\lambda$ is a net in $X$ such that $x_\lambda\to x$ and $x_\lambda\in K_\lambda$, then $x\in K$.
\item[(ii)] If $x\in K$, then there are a subnet $(K_{\lambda'})_{\lambda'}$ of $(K_\lambda)_\lambda$
and a net $(x_{\lambda'})_{\lambda'}$ such that $x_{\lambda'}\in K_{\lambda'}$ and $x_{\lambda'}\to x$.
\end{enumerate}
\end{lemma}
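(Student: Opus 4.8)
The plan is to unwind the definition of Vietoris convergence into statements about the subbasic sets $U^\pm$ and then translate these into the desired net statements. Recall that a net $(K_\lambda)_\lambda$ converges to $K$ in the Vietoris topology iff it is eventually in every subbasic open set $U^\pm$ containing $K$. So the two parts of the lemma are really the two halves of the Vietoris topology, with $U^-$ (the ``hitting'' sets) controlling part (ii) and $U^+$ (the ``missing'' sets, i.e.\ containment in an open set) controlling part (i).

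First I would prove (i) by contraposition. Suppose $x_\lambda\to x$ with $x_\lambda\in K_\lambda$ but $x\notin K$. Since $K$ is closed and $X$ is compact Hausdorff hence normal, there is an open set $U$ with $K\subseteq U$ and $x\notin\overline{U}$; equivalently, pick disjoint open sets so that $X\setminus\overline{U}$ is an open neighborhood of $x$. Then $K\in U^+$, so by convergence $K_\lambda\in U^+$ eventually, i.e.\ $K_\lambda\subseteq U$ eventually. But $x_\lambda\to x$ forces $x_\lambda\in X\setminus\overline{U}$ eventually, contradicting $x_\lambda\in K_\lambda\subseteq U$. This yields $x\in K$.

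For (ii), fix $x\in K$ and let $\c{N}$ be the directed set (ordered by reverse inclusion) of open neighborhoods $O$ of $x$. For each such $O$ we have $K\cap O\neq\emptyset$, so $K\in O^-$, and by convergence there is an index $\mu(O)\in\Lambda$ beyond which $K_\lambda\in O^-$, meaning $K_\lambda\cap O\neq\emptyset$. The natural move is to form the directed set $\Lambda'$ consisting of pairs $(\lambda,O)$ with $\lambda\geq\mu(O)$, ordered componentwise, together with the projection $f(\lambda,O)=\lambda$; one checks this $f$ is order preserving and cofinal, so $(K_{f(\lambda')})_{\lambda'}$ is a subnet in the sense defined in the excerpt. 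For each $\lambda'=(\lambda,O)$ choose $x_{\lambda'}\in K_\lambda\cap O$ (nonempty by construction). Then $x_{\lambda'}\in K_{f(\lambda')}$, and since the $O$-component runs through a neighborhood base at $x$, one verifies $x_{\lambda'}\to x$.

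The main obstacle is purely bookkeeping in part (ii): ensuring that the index set $\Lambda'$ is genuinely directed and that $f$ satisfies the cofinality condition $\forall\lambda\,\exists\lambda'\colon f(\lambda')\geq\lambda$ required by the excerpt's definition of subnet. The directedness of $\Lambda'$ needs the directedness of both $\Lambda$ and $\c{N}$ simultaneously (given $(\lambda_1,O_1)$ and $(\lambda_2,O_2)$, take $O=O_1\cap O_2$ and a common upper bound $\lambda\geq\lambda_1,\lambda_2,\mu(O)$); this is routine but must be stated carefully so that the chosen $x_{\lambda'}$ really converge to $x$. No deep input is needed beyond normality of $X$ in part (i).
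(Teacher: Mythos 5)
Your proof is correct. The paper's own proof of this lemma is simply the word ``Straightforward,'' so there is nothing to compare against: your argument---regularity of the compact Hausdorff space $X$ together with the $U^+$ sets for (i), and the directed set of pairs $(\lambda,O)$ over neighborhoods $O$ of $x$ together with the $O^-$ sets for (ii), checked against the paper's explicit order-preserving/cofinal definition of subnet---is exactly the routine filling-in the author had in mind.
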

\begin{proof}
Straightforward.
\end{proof}
\begin{theorem}\label{t1}
The hyperspace $\f{S}_\r{clc}E$ is a compact Hausdorff space.
\end{theorem}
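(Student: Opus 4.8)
The plan is to realize $\f{S}_\r{clc}E$ as a \emph{closed subspace} of the full hyperspace $\f{S}_\r{cl}E$ and then invoke general topology. Since $E$ is a compact convex subset of a locally convex Hausdorff space, $E$ is in particular compact Hausdorff, so by the fact recalled at the start of Section \ref{s2} (that $\f{S}_\r{cl}X$ is compact Hausdorff for compact Hausdorff $X$, via \cite[Exercise 3.12]{IllanesNadler1}) the space $\f{S}_\r{cl}E$ is compact Hausdorff in the Vietoris topology. Because a closed subspace of a compact Hausdorff space is itself compact Hausdorff, it suffices to prove that the set of \emph{convex} members, namely $\f{S}_\r{clc}E$, is closed in $\f{S}_\r{cl}E$. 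This reduction is the conceptual heart of the argument and lets me avoid building compactness of $\f{S}_\r{clc}E$ from scratch.

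To establish closedness I would argue with nets, since $E$ need not be metrizable. Let $(K_\lambda)_\lambda$ be a net in $\f{S}_\r{clc}E$ converging in $\f{S}_\r{cl}E$ to some $K\in\f{S}_\r{cl}E$; I must show that $K$ is convex (it is automatically closed). Fix $x,y\in K$ and $t\in[0,1]$, and set $z:=tx+(1-t)y$, which lies in $E$ by convexity of $E$. The goal is to produce $z\in K$. Here Lemma \ref{l1} is exactly the tool needed: part (ii) lets me approximate points of the limit set $K$ by points drawn from the sets $K_\lambda$ along a suitable subnet, and part (i) lets me push a convergent selection back into $K$.

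The one point requiring care is aligning the approximations of $x$ and of $y$ along a \emph{common} index set, and this is where I expect the only real bookkeeping. First, applying Lemma \ref{l1}(ii) to $x\in K$ yields a subnet $(K_{\lambda'})_{\lambda'}$ together with $x_{\lambda'}\in K_{\lambda'}$ and $x_{\lambda'}\to x$; this subnet still converges to $K$. Applying Lemma \ref{l1}(ii) a second time, now to $y\in K$ and to the subnet $(K_{\lambda'})_{\lambda'}$, produces a further subnet $(K_{\lambda''})_{\lambda''}$ with $y_{\lambda''}\in K_{\lambda''}$ and $y_{\lambda''}\to y$, along which the earlier selection still satisfies $x_{\lambda''}\to x$. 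On this common index set each $K_{\lambda''}$ is convex, so $z_{\lambda''}:=tx_{\lambda''}+(1-t)y_{\lambda''}\in K_{\lambda''}$. By joint continuity of addition and scalar multiplication in the ambient locally convex space, $z_{\lambda''}\to z$. Since $(K_{\lambda''})_{\lambda''}$ is a subnet of $(K_\lambda)_\lambda$ it still converges to $K$, so Lemma \ref{l1}(i) applied to the net $z_{\lambda''}\in K_{\lambda''}$ gives $z\in K$. Thus $K$ is convex, $\f{S}_\r{clc}E$ is closed in $\f{S}_\r{cl}E$, and the compact Hausdorff conclusion follows.
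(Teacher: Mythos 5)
Your proof is correct and takes essentially the same approach as the paper's: reduce Theorem \ref{t1} to showing that $\f{S}_\r{clc}E$ is closed in the compact Hausdorff space $\f{S}_\r{cl}E$, then verify convexity of the limit set via a net argument using both parts of Lemma \ref{l1}. Your explicit double application of Lemma \ref{l1}(ii), refining to a common subnet for the approximations of $x$ and $y$, is in fact slightly more careful bookkeeping than the paper, which extracts a single subnet carrying both approximating nets at once.
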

\begin{proof}
It is enough to show that $\f{S}_\r{clc}E$ is a closed subset of $\f{S}_\r{cl}E$.
Suppose that $(K_\lambda)_\lambda$ is a net in $\f{S}_\r{clc}E$ converging to $K\in\f{S}_\r{cl}E$. We must show that $K$ is convex.
Suppose that $x,y\in K$ and $0\leq t\leq1$. By Lemma \ref{l1}(ii) there exist a subnet $(K_{\lambda'})_{\lambda'}$ of $(K_\lambda)_\lambda$
and nets $(x_{\lambda'})_{\lambda'},(y_{\lambda'})_{\lambda'}$ such that $x_{\lambda'},y_{\lambda'}\in K_{\lambda'}$ and $x_{\lambda'}\to x,y_{\lambda'}\to y$.
Thus $(tx_{\lambda'}+(1-t)y_{\lambda'})_{\lambda'}$ is a net in $K_{\lambda'}$ converging to $tx+(1-t)y$. Now it follows from Lemma \ref{l1}(i) that
$tx+(1-t)y\in K$. The proof is complete.
\end{proof}
For metrizable $E$ we have the following strong result of Nadler-Quinn-Stavrakos:
\begin{theorem}\label{tHilbertCube}
Suppose that $E$ is metrizable and the real dimension of the smallest real hyperplane containing $E$ is $\geq2$.
Then $\f{S}_\r{clc}E$ is homeomorphic to Hilbert cube.
\end{theorem}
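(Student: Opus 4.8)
The plan is to bypass a direct verification of Toruńczyk's characterization of the Hilbert cube and instead realize $\f{S}_\r{clc}E$ as an infinite-dimensional compact convex subset of a metrizable locally convex space; the conclusion then follows immediately from Keller's theorem, which asserts that every such set is homeomorphic to the Hilbert cube $Q$. The bridge is the support-function (Hörmander--Rådström) embedding, which converts the \emph{Minkowski} convex structure on the hyperspace into a genuine affine structure.

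First I would reduce to the case where $E$ is a compact convex subset of a Banach space: being compact, convex and metrizable, $E$ admits an affine homeomorphism onto such a set, and the Vietoris topology is unchanged. For $K\in\f{S}_\r{clc}E$ let $h_K$ be the support function $h_K(\phi)=\sup_{x\in K}\langle x,\phi\rangle$ restricted to the weak*-compact dual ball $B^*$. Since each $K$ is norm-compact, $h_K$ is weak*-continuous on $B^*$, so $K\mapsto h_K$ maps $\f{S}_\r{clc}E$ into the Banach space $\c{C}(B^*)$ of continuous functions. This map is affine for the Minkowski combination, that is
\begin{align*}
h_{tK+(1-t)K'}=t\,h_K+(1-t)\,h_{K'}\qquad(0\le t\le1),
\end{align*}
it is injective by Hahn--Banach (a closed convex set is the intersection of its supporting half-spaces), and by the classical identity $\sup_{B^*}|h_K-h_{K'}|$ equal to the norm-induced Hausdorff distance it is an isometry onto its image. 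As the Vietoris topology on the compact metric space $E$ coincides with the Hausdorff-metric topology, this embedding is a homeomorphism onto a convex set $\widetilde{S}\subseteq\c{C}(B^*)$, which is compact by Theorem \ref{t1}.

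It remains to show $\widetilde{S}$ is infinite-dimensional, and this is exactly where the hypothesis $\dim E\ge2$ enters; I expect it to be the conceptual crux. (For $\dim E=1$ the hyperspace is the $2$-simplex of subintervals, so the statement genuinely fails, which is a useful sanity check.) When $\dim E\ge2$ the set $E$ contains a two-dimensional convex body, and inside it one finds nondegenerate segments $S_1,S_2,\ldots$ from a common center pointing in pairwise non-parallel directions $v_1,v_2,\ldots$. Their support functions are affinely independent in $\c{C}(B^*)$: a finite affine relation among the $h_{S_k}$ would force a linear relation among the piecewise-linear functions $\phi\mapsto\langle v_k,\phi\rangle^+$, which is impossible because these have distinct crease loci. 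Hence $\widetilde{S}$ has infinite affine dimension, and Keller's theorem yields $\widetilde{S}\cong Q$.

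The principal obstacle is therefore twofold. On the analytic side one must confirm that the support-function map is a homeomorphism for the Vietoris topology after the reduction to a Banach space, i.e. reconcile the Hörmander identity with the abstract locally convex setting; on the geometric side one must manufacture enough affinely independent convex subsets to force infinite-dimensionality, which is precisely the content of the dimension hypothesis. Everything else---convexity of $\widetilde{S}$ via Minkowski combinations, its compactness (Theorem \ref{t1}), its metrizability, and the final appeal to Keller's theorem---is then routine.
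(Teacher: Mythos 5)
Your argument is correct, but it is not the paper's route: the paper disposes of this theorem in one line, by declaring it a restatement of \cite[Theorem 2.2]{NadlerQuinnStavrakos1} and adding only the parenthetical remark that metrizability of $E$ is all that proof requires. What you have done, in effect, is reconstruct a proof of the cited result rather than cite it: the H\"{o}rmander--R{\aa}dstr\"{o}m support-function map realizes $\f{S}_\r{clc}E$ (Vietoris $=$ Hausdorff-metric topology, after affinely embedding the compact metrizable convex set $E$ into a Banach space) as a compact convex subset $\widetilde{S}$ of $\c{C}(B^*)$ --- the map is well defined since $h_K$ is weak*-continuous for norm-compact $K$, injective by Hahn--Banach, affine because $\f{S}_\r{clc}E$ is stable under Minkowski combinations (here convexity of $E$ is used), an isometry by H\"{o}rmander's identity, and $\widetilde{S}$ is compact by Theorem \ref{t1} --- and then your crease-locus argument shows that the segments $[c,c+\epsilon v_k]$ in a two-dimensional piece of $E$ have affinely independent support functions, so $\widetilde{S}$ is infinite-dimensional and Keller's theorem applies. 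Each of these steps is sound, and your $\dim E=1$ sanity check (the hyperspace of an interval is a $2$-cell) correctly shows the dimension hypothesis is sharp. The trade-off between the two approaches: the paper's citation is economical and shifts the burden to the literature, while your self-contained argument makes transparent exactly where each hypothesis enters --- metrizability is used only to embed $E$ affinely into a Banach space, which is precisely the observation behind the paper's parenthetical remark, and $\dim E\geq 2$ is used only to manufacture infinitely many affinely independent points of $\widetilde{S}$; this is also essentially the mechanism of the original Nadler--Quinn--Stavrakos proof, so your reconstruction doubles as a verification of the paper's claim that their argument goes through for merely metrizable $E$.
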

\begin{proof}
This is a restatement of \cite[Theorem 2.2]{NadlerQuinnStavrakos1}. (Note that in the proof of \cite[Theorem 2.2]{NadlerQuinnStavrakos1}
it is enough that $K$ be metrizable.)
\end{proof}
For some results similar to Theorem \ref{tHilbertCube} in the case that $E$ is not metrizable see \cite{BazylevychRepovsZarichnyi1}.
A direct consequence of Theorem \ref{tHilbertCube} is the following.
\begin{corollary}\label{c1}
Let $(B,L)$ be a compact quantum metric space such that the (real vector space) dimension of $B$ is $\geq2$. Then $\f{S}_\r{clc}\c{S}B$
is homeomorphic to Hilbert cube. In particular, if $\f{q}A$ is a C*-algebraic quantum metric space
such that $A\neq0,\mathbb{C}$ then $\f{S}_\r{clc}\c{S}A$ is homeomorphic to Hilbert cube.
\end{corollary}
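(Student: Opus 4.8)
The plan is to derive Corollary \ref{c1} as a direct application of Theorem \ref{tHilbertCube}, whose hypotheses are that $E$ is metrizable and that the affine dimension of $E$ is at least $2$. So the whole task reduces to verifying these two conditions for $E=\c{S}B$ under the given assumption that $\dim_{\mathbb{R}}B\geq2$, and then specializing to the C*-algebraic case. First I would address metrizability. The state space $\c{S}B$ carries the weak* topology and sits inside the dual $B^*$; since $(B,L)$ is a compact quantum metric space, condition (ii) of the definition guarantees that the weak*-topology on $\c{S}B$ coincides with the topology induced by the Monge–Kantorovich-type pseudo-metric $\rho_L$ of Formula (\ref{f4}). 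By condition (i) of the definition, $L(b)=0$ iff $b$ is a scalar multiple of the order unit $e$, which is exactly what forces $\rho_L$ to separate points of $\c{S}B$ (two states agreeing on all $b$ with $L(b)\leq1$ must agree on everything once scalars are modded out, hence are equal). Thus $\rho_L$ is a genuine metric whose topology is the weak*-topology, so $\c{S}B$ is metrizable.

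Next I would pin down the affine dimension. The smallest real hyperplane (affine subspace) containing $\c{S}B$ has dimension equal to $\dim_{\mathbb{R}}(\mathrm{aff}\,\c{S}B)$, and I claim this equals $\dim_{\mathbb{R}}B-1$. The cleanest way to see this is via the standard duality for order unit spaces recalled in Section \ref{s3}: $B$ is recovered (as an order unit space) from $\c{S}B$ as the continuous affine functions $\c{A}\c{S}B$, and the evaluation map identifies $B$ with a space of affine functions on $\c{S}B$ that separates its points. The functions on $\c{S}B$ arising from $B$ span a space of dimension $\dim_{\mathbb{R}}B$; modding out the constants (the order unit gives the constant function) shows that the affine span of $\c{S}B$ has dimension exactly $\dim_{\mathbb{R}}B-1$. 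Hence $\dim_{\mathbb{R}}B\geq2$ translates precisely into affine dimension $\geq1$—so here I would be careful: Theorem \ref{tHilbertCube} requires dimension $\geq2$, i.e.\ $\dim_{\mathbb{R}}B\geq3$, so the honest statement needs either $\dim_{\mathbb{R}}B\geq3$, or a separate observation that $\dim_{\mathbb{R}}B=2$ already makes $\c{S}B$ a nondegenerate segment whose hyperspace of closed convex (equivalently, closed) subsets is itself a Hilbert cube. I expect this dimension-count bookkeeping to be the main point requiring care, since it is where an off-by-one is easiest to make; I would resolve it by treating the one-dimensional affine case ($\dim_{\mathbb{R}}B=2$) explicitly and invoking Theorem \ref{tHilbertCube} for the rest.

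Finally, for the ``in particular'' clause I would specialize to a C*-algebraic quantum metric space $\f{q}A$ with underlying data $(A,B,L)$. Here $B$ is a norm-dense order unit subspace of $A_\r{sa}$ containing $1_A$, and by the remarks in Section \ref{s3} the restriction map gives an affine homeomorphism $\c{S}A\cong\c{S}B$, so $\f{S}_\r{clc}\c{S}A\cong\f{S}_\r{clc}\c{S}B$ as topological spaces. It therefore suffices to check the dimension hypothesis on $B$. Since $B$ is dense in $A_\r{sa}$, the condition $A\neq0,\mathbb{C}$ forces $\dim_{\mathbb{R}}A_\r{sa}\geq2$ and hence $\dim_{\mathbb{R}}B\geq2$; in fact for any noncommutative or higher-dimensional $A$ one gets $\dim_{\mathbb{R}}B$ large enough, and the only genuinely borderline scalar case $A=\mathbb{C}$ is excluded by hypothesis (it would give a single-point state space). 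With the dimension condition in hand, the already-established metrizability and affine-dimension computation let me apply the previous paragraph's conclusion, giving that $\f{S}_\r{clc}\c{S}A$ is homeomorphic to the Hilbert cube. This completes the chain of reductions and the only real work, beyond the dimension bookkeeping, is the routine verification that the restriction map $\c{S}A\to\c{S}B$ respects the Vietoris topology on closed convex subsets, which is immediate because it is an affine homeomorphism and the Vietoris topology is defined purely from the topology of the ambient space.
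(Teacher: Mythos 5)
Your route is the same as the paper's: Corollary \ref{c1} is stated there as a direct consequence of Theorem \ref{tHilbertCube}, with no further argument, so the whole content is exactly the two verifications you carry out, namely metrizability of $\c{S}B$ (conditions (i) and (ii) make $\rho_L$ a genuine metric inducing the weak*-topology, as you say) and the affine-dimension count. Your count is also correct: the affine hull of $\c{S}B$ is the hyperplane $\{f\in B^*:f(e)=1\}$, of real dimension $\dim_{\bb{R}}B-1$, since every bounded functional on an order unit space is a difference of nonnegative multiples of states. Hence the hypothesis $\dim_{\bb{R}}B\geq2$ yields affine dimension $\geq1$ only, while Theorem \ref{tHilbertCube} (i.e.\ the Nadler--Quinn--Stavrakos theorem, whose dimension hypothesis refers to the affine hull of $E$) demands $\geq2$. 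You were right not to wave this away: the off-by-one is a genuine defect of the corollary as printed, not of your bookkeeping.

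The gap is in your proposed repair of the borderline case $\dim_{\bb{R}}B=2$. There $\c{S}B$ is indeed a nondegenerate segment, but the hyperspace of closed \emph{convex} subsets of a segment is not a Hilbert cube: the nonempty closed convex subsets of $[0,1]$ are exactly the intervals $[a,b]$, and $[a,b]\mapsto(a,b)$ identifies $\f{S}_\r{clc}[0,1]$, with its Vietoris (equivalently Hausdorff-metric) topology, with the triangle $\{(a,b):0\leq a\leq b\leq1\}$ --- a two-dimensional cell, hence not homeomorphic to the Hilbert cube. Your parenthetical ``(equivalently, closed)'' is also false, since $\{0,1\}$ is closed in $[0,1]$ but not convex; the Hilbert-cube fact you are thinking of (Schori--West) concerns $\f{S}_\r{cl}[0,1]$, the hyperspace of \emph{all} closed subsets, which is not the hyperspace the corollary is about. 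Consequently the borderline case cannot be patched; it is a counterexample. Take $A=\bb{C}^2\cong\c{C}X$ for a two-point metric space $X$: then $A\neq0,\bb{C}$, $B=A_\r{sa}$ has real dimension $2$, and $(\c{C}X,\r{Lip}_dX,\c{L}_d)$ is a C*-algebraic compact quantum metric space, yet $\f{S}_\r{clc}\c{S}A$ is a triangle. The statement your dimension count actually proves --- and the statement the corollary ought to make --- requires $\dim_{\bb{R}}B\geq3$ in the first assertion and $A\neq0,\bb{C},\bb{C}^2$ in the second.
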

Let $\partial_\r{e}E$ denote the subspace of extreme points of $E$.
\begin{theorem}\label{t2}
If the hyperspace $\f{S}_\r{clf}E$ is compact then $\partial_\r{e}E$ is compact.
\end{theorem}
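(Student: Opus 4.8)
The plan is to identify the extreme points of $E$ with the singleton closed faces, and then to exhibit $\partial_\r{e}E$ as a \emph{closed} subset of the hyperspace $\f{S}_\r{clf}E$; compactness of $\partial_\r{e}E$ follows at once from the hypothesis that $\f{S}_\r{clf}E$ is compact. The starting observation is that, since $E$ is Hausdorff, every singleton $\{x\}$ is a closed subset, and $\{x\}$ is a \emph{face} of $E$ precisely when $x$ is an extreme point: $\{x\}$ being a face means that whenever $x=ty+(1-t)z$ with $y,z\in E$ and $0<t<1$ one has $y=z=x$, which is exactly extremality. Hence the canonical embedding $x\mapsto\{x\}$ of $E$ into $\f{S}_\r{cl}E$ carries $\partial_\r{e}E$ bijectively onto the set $\{\{x\}:x\in\partial_\r{e}E\}$ of those members of $\f{S}_\r{clf}E$ that are singletons.

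First I would record that this embedding is a homeomorphism onto its image. By the remark in Section \ref{s2} (valid for any compact Hausdorff space, and $E$ is one such), the subspace topology on $\{\{x\}:x\in E\}$ agrees with the original topology of $E$. Restricting to $\partial_\r{e}E$ then shows that $\partial_\r{e}E$ is homeomorphic to $\{\{x\}:x\in\partial_\r{e}E\}$ equipped with the topology it inherits from $\f{S}_\r{clf}E$.

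Next I would prove that the set $\{\{x\}:x\in E\}$ of all singletons is closed in $\f{S}_\r{cl}E$. The map $x\mapsto\{x\}$ is continuous, since the preimages of the subbasic sets $U^-$ and $U^+$ are both equal to $U$; thus its image is a continuous image of the compact space $E$, hence compact, and as $\f{S}_\r{cl}E$ is Hausdorff this image is closed. Because $\f{S}_\r{clf}E$ carries the subspace topology from $\f{S}_\r{cl}E$, the intersection
\[
\f{S}_\r{clf}E\cap\{\{x\}:x\in E\}=\{\{x\}:x\in\partial_\r{e}E\}
\]
is closed in $\f{S}_\r{clf}E$, the equality being precisely the singleton-face characterization of extreme points recorded above.

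Finally I would assemble the pieces: under the hypothesis that $\f{S}_\r{clf}E$ is compact, the closed subset $\{\{x\}:x\in\partial_\r{e}E\}$ is compact, and through the homeomorphism of the second step this forces $\partial_\r{e}E$ to be compact. I do not expect a genuine obstacle here; the only points demanding care are the verification that singleton faces are exactly the extreme points and the use of compactness of $E$ together with the Hausdorffness of $\f{S}_\r{cl}E$ to close up the set of singletons, after which the subspace-topology convention for $\f{S}_\r{clf}E$ does the rest.
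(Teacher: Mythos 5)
Your proof is correct, but it takes a genuinely different route from the paper's. Both arguments hinge on the same identification (singleton closed faces $=$ extreme points), but the paper proves directly that $\partial_\r{e}E$ is closed in $E$ via a net argument: given a net $(e_\lambda)_\lambda$ of extreme points converging to $x\in E$, it uses compactness of $\f{S}_\r{clf}E$ to extract a subnet with $\{e_{\lambda'}\}\to K$ in $\f{S}_\r{clf}E$, then invokes Lemma \ref{l1} to identify $K=\{x\}$, so that $x$ is extreme. You instead avoid nets and Lemma \ref{l1} altogether: you observe that $x\mapsto\{x\}$ is continuous (preimages of $U^\pm$ both equal $U$), so the set of all singletons is a compact, hence closed, subset of the Hausdorff space $\f{S}_\r{cl}E$; intersecting with $\f{S}_\r{clf}E$ gives exactly the singleton faces, which is therefore a closed --- hence compact --- subset of the compact space $\f{S}_\r{clf}E$, and is homeomorphic to $\partial_\r{e}E$ via the embedding. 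Your argument is arguably cleaner and more self-contained, resting only on standard point-set facts (continuous image of a compact space, compact subsets of Hausdorff spaces are closed, the subspace-topology convention for $\f{S}_\r{clf}E$); the paper's argument buys uniformity with the rest of Section \ref{s5}, where the same net-theoretic Lemma \ref{l1} also drives the proofs of Theorems \ref{t1} and \ref{t3}. One small point worth making explicit in your write-up, which you use implicitly: the Vietoris topology on $\f{S}_\r{clf}E$ (defined by the relative sets $U^\pm\cap\f{S}_\r{clf}E$) is indeed the subspace topology inherited from $\f{S}_\r{cl}E$, which justifies your intersection step.
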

\begin{proof}
Suppose that $\f{S}_\r{clf}E$ is compact. We must show that $\partial_\r{e}E$ is a closed subset of $E$.
Let $(e_\lambda)_\lambda$ be a net in $\partial_\r{e}E$ converging to $x\in E$. Since $e_\lambda$ is an extreme point
$\{e_\lambda\}$ is a closed face of $E$. Thus there is a subnet $(e_{\lambda'})_{\lambda'}$ such that $\{e_{\lambda'}\}\to K$
in $\f{S}_\r{clf}E$. Now it follows from Lemma \ref{l1} that $K=\{x\}$ which means that $x\in\partial_\r{e}E$. Thus
$\partial_\r{e}E$ is a closed subset of $E$.
\end{proof}
In general the converse of Theorem \ref{t2} is not satisfied even if $E$ is finite dimensional, see \cite{ReiterStavraks1}.
We say that $E$ is stable \cite{Papadopoulou1} if for every $0\leq t\leq1$ the map $(x,y)\mapsto tx+(1-t)y$ from $E\times E$ into $E$
is open. Among examples of stable compact convex sets are Bauer simplices \cite[Theorem 1]{Obrien1}.
For a complete account on Bauer simplices see \cite{Alfsen1}. It is well known that a unital C*-algebra is commutative if and only if
its state space with weak*-topology is a Bauer simplex \cite[Remark in page 296]{AlfsenShultz1},\cite{Batty1}.
To our knowledge the following result has not been mentioned before in the literatures.
\begin{theorem}\label{t3}
If $E$ is stable then $\f{S}_\r{clf}E$ is compact.
\end{theorem}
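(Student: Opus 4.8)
The plan is to show that $\f{S}_\r{clf}E$ is a closed subset of the compact Hausdorff space $\f{S}_\r{cl}E$; being closed in a compact space, it is then compact. Since $\f{S}_\r{clf}E\subseteq\f{S}_\r{clc}E$ and the latter is already closed in $\f{S}_\r{cl}E$ by Theorem \ref{t1}, any net $(K_\lambda)_\lambda$ in $\f{S}_\r{clf}E$ converging to some $K\in\f{S}_\r{cl}E$ automatically has a \emph{convex} limit $K$. So the whole problem reduces to a single implication: the limit of a net of closed faces is again a face.

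To this end let $(K_\lambda)_\lambda$ be a net of closed faces converging to $K\in\f{S}_\r{clc}E$, and suppose $z=tx+(1-t)y\in K$ with $x,y\in E$ and $0<t<1$; I must deduce $x,y\in K$. First, applying Lemma \ref{l1}(ii) to the point $z\in K$, I obtain a subnet $(K_{\lambda'})_{\lambda'}$ and points $z_{\lambda'}\in K_{\lambda'}$ with $z_{\lambda'}\to z$. Consider the continuous map $m_t\colon E\times E\to E$, $m_t(a,b)=ta+(1-t)b$, which is open precisely because $E$ is stable, and which satisfies $m_t(x,y)=z$.

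The heart of the argument is an open-map lifting lemma for nets, which I would isolate and prove separately: if $f$ is an open continuous map, $f(w)=z$, and $z_{\lambda'}\to z$, then there is a subnet $(z_{\lambda''})_{\lambda''}$ and points $w_{\lambda''}$ with $f(w_{\lambda''})=z_{\lambda''}$ and $w_{\lambda''}\to w$. (The index set is the set of pairs $(\lambda',W)$ with $W$ a neighborhood of $w$ and $z_{\lambda'}\in f(W)$, ordered by the product of the original order with reverse inclusion; openness of $f$ makes $f(W)$ a neighborhood of $z$, which yields both directedness and cofinality, and one then picks $w_{(\lambda',W)}\in W$ with $f(w_{(\lambda',W)})=z_{\lambda'}$.) Applying this to $f=m_t$ and $w=(x,y)$ produces a further subnet $(z_{\lambda''})_{\lambda''}$ of $(z_{\lambda'})_{\lambda'}$ and nets $x_{\lambda''}\to x$, $y_{\lambda''}\to y$ in $E$ with $tx_{\lambda''}+(1-t)y_{\lambda''}=z_{\lambda''}\in K_{\lambda''}$. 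Since each $K_{\lambda''}$ is a \emph{face} of $E$ and $0<t<1$, this forces $x_{\lambda''}\in K_{\lambda''}$ and $y_{\lambda''}\in K_{\lambda''}$. Finally, as $(K_{\lambda''})_{\lambda''}$ is a subnet of $(K_\lambda)_\lambda$ it still converges to $K$, so Lemma \ref{l1}(i) applied to $x_{\lambda''}\to x$ (and to $y_{\lambda''}\to y$) gives $x\in K$ and $y\in K$. Hence $K$ is a face, so $K\in\f{S}_\r{clf}E$, completing the proof that $\f{S}_\r{clf}E$ is closed and therefore compact.

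The step I expect to be the genuine obstacle is the open-map lifting lemma together with the bookkeeping of the two successive passages to subnets: Lemma \ref{l1}(ii) is used once to approximate $z$ from within the $K_\lambda$'s, and then the lifting lemma forces a second refinement in order to split each $z_{\lambda'}$ as a convex combination converging to $(x,y)$. One must check that after both refinements the face property of the members and the convergence $K_{\lambda''}\to K$ are all preserved, which they are because a subnet of a convergent net converges to the same limit. The remaining observation, that openness of $m_t$ for the particular $t\in(0,1)$ occurring in the combination is exactly the input stability provides, is routine.
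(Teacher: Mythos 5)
Your proof is correct and follows essentially the same route as the paper's: reduce compactness to closedness in $\f{S}_\r{clc}E$, use Lemma \ref{l1}(ii) to approximate $z$ from within the $K_\lambda$'s, invoke stability (openness of the convex-combination map) to split $z_{\lambda'}$ into nets converging to $x$ and $y$, apply the face property of each $K_{\lambda''}$, and conclude with Lemma \ref{l1}(i). The only difference is that you make explicit, via the lifting lemma indexed by pairs $(\lambda',W)$, the subnet construction that the paper compresses into the phrase ``this property enables us to find a subnet,'' which is a welcome sharpening rather than a different argument.
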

\begin{proof}
Suppose that $E$ is stable. We must show that $\f{S}_\r{clf}E$ is a closed subset of $\f{S}_\r{clc}E$.
Let $(K_\lambda)_\lambda$ be a net in $\f{S}_\r{clf}E$ converging to $K\in\f{S}_\r{clc}E$. We show that $K$ is a face of $E$.
Suppose that for some $0\leq t\leq1$ and $x,y\in E$ we have $z:=tx+(1-t)y\in K$. By Lemma \ref{l1}(ii) there exist a subnet $(K_{\lambda'})_{\lambda'}$
of $(K_\lambda)_\lambda$ and a net $(z_{\lambda'})_{\lambda'}$ such that $z_{\lambda'}\in K_{\lambda'}$ and $z_{\lambda'}\to z$. Since the map
$\phi:(x',y')\mapsto tx'+(1-t)y'$ is open and continuous, for every open $U$ in $E$ containing $z$ there are opens $V,W$ respectively
containing $x,y$ such that $\phi(V\times W)$ is an open in $U$. This property enables us to find a subnet $(z_{\lambda''})_{\lambda''}$
of $(z_{\lambda'})_{\lambda'}$ and nets $(x_{\lambda''})_{\lambda''},(y_{\lambda''})_{\lambda''}$ such that $x_{\lambda''}\to x,y_{\lambda''}\to y$
and $z_{\lambda''}=tx_{\lambda''}+(1-t)y_{\lambda''}$. Since $K_{\lambda''}$ is a face we have $x_{\lambda''},y_{\lambda''}\in K_{\lambda''}$.
Now it follows from Lemma \ref{l1}(i) that $x,y\in K$. The proof is complete.
\end{proof}
A direct consequence of Theorems \ref{t2} and \ref{t3} is the following result.
\begin{corollary}
Let $A$ be a unital C*-algebra. If $\f{S}_\r{cl}\f{q}A$ is compact then the space of pure states of $A$ is weak*-compact.
If $\c{S}A$ is stable then $\f{S}_\r{cl}\f{q}A$ is compact.
\end{corollary}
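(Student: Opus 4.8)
The plan is to apply Theorems \ref{t2} and \ref{t3} directly, taking $E:=\c{S}A$. First I would note that, since $A$ is a unital C*-algebra, its state space $\c{S}A$ is a weak*-compact convex subset of the dual $A^*$: convexity is clear, and weak*-compactness follows from the Banach--Alaoglu theorem because $\c{S}A$ is a weak*-closed subset of the unit ball of $A^*$. Hence $E=\c{S}A$ is exactly the kind of object (a compact convex subset of a locally convex Hausdorff space) to which the results of this section apply.

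The decisive ingredient is the standard identification of the pure states of $A$ with the extreme points of $\c{S}A$; that is, $\partial_\r{e}\c{S}A$ is precisely the pure state space of $A$, carrying the weak*-topology inherited as a subspace of $\c{S}A$. By definition we also have $\f{S}_\r{cl}\f{q}A=\f{S}_\r{clf}\c{S}A$, so both objects appearing in the statement are already present in the hypotheses of the two theorems once we specialize $E$ to $\c{S}A$.

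With these identifications in hand the two assertions become immediate instances of the preceding theorems. For the first claim, if $\f{S}_\r{cl}\f{q}A=\f{S}_\r{clf}\c{S}A$ is compact, then Theorem \ref{t2} applied with $E=\c{S}A$ yields that $\partial_\r{e}\c{S}A$ is compact, which is exactly the statement that the pure state space of $A$ is weak*-compact. For the second claim, if $\c{S}A$ is stable, then Theorem \ref{t3} applied with $E=\c{S}A$ yields that $\f{S}_\r{clf}\c{S}A=\f{S}_\r{cl}\f{q}A$ is compact.

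I expect essentially no obstacle here: the entire content is to verify that the abstract hypotheses of Theorems \ref{t2} and \ref{t3} are met by $\c{S}A$ and to invoke the pure-state/extreme-point correspondence. Both are routine facts from the basic theory of C*-algebras and their state spaces, so the corollary follows by specialization rather than by any new argument. The only point worth stating carefully is the correspondence $\partial_\r{e}\c{S}A=$ (pure states of $A$), since it is what translates the geometric conclusion of Theorem \ref{t2} into the operator-algebraic language of the corollary.
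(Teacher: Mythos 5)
Your proposal is correct and matches the paper exactly: the paper presents this corollary as a direct consequence of Theorems \ref{t2} and \ref{t3}, obtained precisely by taking $E=\c{S}A$, using the definition $\f{S}_\r{cl}\f{q}A=\f{S}_\r{clf}\c{S}A$, and identifying the pure states of $A$ with the extreme points of $\c{S}A$. Your extra remarks (weak*-compactness of $\c{S}A$ via Banach--Alaoglu, the pure-state/extreme-point correspondence) simply make explicit the routine identifications the paper leaves implicit.
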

\section{Infimum distance and an analog of Lipschitz seminorm}\label{s6}
Let $A$ be a unital C*-algebra  and let $\rho$ be a compatible metric on $\c{S}A$.
Let $L_1:A_\r{sa}\to[0,\infty]$ be a seminorm given by the analog of Formula (\ref{f5}):
\begin{align*}
L_1(a)=\sup\{\frac{|\mu(a)-\nu(a)|}{\rho(\mu,\nu)}:\mu,\nu\in\c{S}A, \mu\neq \nu\}.
\end{align*}
Let $H$ denote the Hilbert space of the universal representation of $A$, and $\c{B}H$ be the algebra of bounded operators on $H$.
Then by definition we have $A\subset A''\subseteq \c{B}H$. For $a\in A_\r{sa}$ let $E_a$ denotes the spectral measure of $a$ defined on the
Borel subsets of $\bb{R}$ where $a$ is considered as an element of $\c{B}H$. It is well known that for every closed subset $S$ of $\bb{R}$
the projection $E_a(S)$ is a closed projection in $A''$. (The converse is also true \cite[Theorem A1]{Akemann2},\cite{Akemann3}, that is if
$a\in A_\r{sa}''$ and $E_a(S)$ is a closed projection for every closed subset $S\subseteq\bb{R}$ then $a\in A$.)
For $a\in A_\r{sa}$ and $\lambda\in\bb{R}$ let $\c{F}_{a,\lambda}$ denote the weak*-closed face of $\c{S}A$ corresponding to the closed projection
$E_a(\{\lambda\})$ as in Proposition \ref{p3}. (In the case that $E_a(\{\lambda\})=0$ we let $\c{F}_{a,\lambda}=\emptyset$.)
In Section \ref{s2} we restated the definition of Lipschitz seminorm for an ordinary metric space as Formula (\ref{f6}). 
Now analogously we define a function $L_2:A_\r{sa}\to[0,\infty]$ by
\begin{align*}
L_2(a)=\sup_{\lambda<\lambda'\in\mathbb{R}}\frac{\lambda'-\lambda}{\c{I}_{\rho}(\c{F}_{a,\lambda'},\c{F}_{a,\lambda})}.
\end{align*}
We have $L_2\leq L_1$ but in general $L_2$ is not a seminorm.
\begin{question}
Under which conditions is $L_2$ a seminorm (with Leibniz property) on any commutative subalgebra of $A$?  
\end{question} 
Suppose $\rho$ is induced by a C*-algebraic quantum metric structure $(A,B,L)$ i.e. $\rho=\rho_L$. Then we have $L_2(a)\leq L_1(a)\leq L(a)$ for $a\in B$.
As we saw in Section \ref{s2} in the classical case $(A,B,L)=(\c{C}X,\r{Lip}_dX,\c{L}_d)$ we have $L_2=L_1=L$.
By Theorem 4.1 of \cite{Rieffel3} we know that if $L$ is lower semicontinuous (which means $\{a\in B:L(a)\leq1\}$ is closed in $B$
w.r.t. the C*-norm) then $L_1(a)=L(a)$ for every $a\in B$.
\section{Some questions and problems}\label{s7}
We saw that for a unital commutative C*-algebra $A$, $\f{S}_\r{cl}\f{q}A$ is compact.
\begin{problem}
Characterize those unital C*-algebras $A$ such that $\f{S}_\r{cl}\f{q}A$ is compact.
\end{problem}
\begin{question}
For which C*-algebras $A$, is $\f{S}_\r{cl}\f{q}A$ (path or locally path) connected? (See \cite{BanakhVoytsitskyy1} in the classical case.)
\end{question}
Let $\c{M}_n$ denote the C*-algebra of $n\times n$ matrixes. In NC Geometry $\f{q}\c{M}_n$ is usually considered as the
finite NC space with $n$ points. Since $\c{M}_n=\c{M}^{**}$ any projection in $\c{M}_n$ is closed and open
(\cite[Proposition II.18]{Akemann1}), and hence $\f{S}_\r{cl}\f{q}\c{M}_n$ as a set is canonically identified with $\cup_{i=1}^n\r{Gr}(i,n)$
where $\r{Gr}(i,n)$ denote the Grassmannian manifold of $i$-dimensional subspaces of $\bb{C}^n$.
\begin{question}
Is the subspace of $\f{S}_\r{cl}\f{q}\c{M}_n$ containing projections of rank $i$ homeomorphic to $\r{Gr}(i,n)$?
\end{question}
Let $X$ be a compact Hausdorff space and $C$ be a C*-subalgebra of $\c{C}X$ containing $1_X$.
Let $Z$ denote the pure state space of $C$ with weak*-topology. We have a canonical continuous surjective
map $\Gamma:X\to Z$ defined by $\Gamma(x)(c)=c(x)$ ($c\in C$). It is easily checked that the topology of $Z$ is the quotient
topology under $\Gamma$. Also $\Gamma$ induces the family $\{K_z\}_{z\in Z}$ of nonempty disjoint closed subsets of $X$ parameterized by $Z$
where $K_z:=\Gamma^{-1}(z)$. A generalization of this notion is as follows.
\begin{definition}\label{d1}
Let $A$ be a unital C*-algebra and $C$ be a C*-subalgebra of $A$ containing the unit.
Let $Z$ denote the pure state space of $C$. For every $z\in Z$ let $\c{F}_z:=\{\mu\in\c{S}A:\mu(c)=z(c), c\in C\}$. Then $\c{F}_z$
is a weak*-closed face of $\c{S}A$. We say that $\{\c{F}_z\}_{z\in Z}$ is the family of closed subsets of $\f{q}A$ parameterized by $\f{q}C$.
\end{definition}
Let $0\leq\theta<1$. The quantum torus $\bb{T}^2_\theta:=\f{q}\c{C}\bb{T}^2_\theta$ is
the NC space associated to the universal C*-algebra $\c{C}\bb{T}^2_\theta$
generated by two unitary elements $u,v$ satisfying $uv=e^{2\pi i\theta}vu$. Let $\bb{T}:=\{z\in\bb{C}:|z|=1\}$ denote the unit circle.
We identify the C*-subalgebra generated by $v$ with $\c{C}\bb{T}$ via the *-isomorphism given by the assignment $v\mapsto\r{id}_\bb{T}$
where $\r{id}_\bb{T}\in\c{C}\bb{T}$ is the identity function. For every $z\in\bb{T}$ let
$\bb{T}_{\theta,z}:=\{\mu\in\c{S}\c{C}\bb{T}^2_\theta:\mu(f)=f(z), f\in \c{C}\bb{T}\}$. Then we call $\{\bb{T}_{\theta,z}\}_{z\in\bb{T}}$ the family
of $v$-sub-circles in $\bb{T}^2_\theta$. The name is justified as follows. It is clear that $\c{C}\bb{T}^2_0$ can be identified
with $\c{C}\bb{T}^2$ via the *-isomorphism given by the assignments $u\mapsto\r{id}_1,v\mapsto\r{id}_2$ where $\r{id}_1,\r{id}_2\in\c{C}\bb{T}^2$
are respectively the projection functions on the first and second components of $\bb{T}^2=\bb{T}\times\bb{T}$. Then $\bb{T}_{0,z}$
is identified with the set of Borel probability measures $\mu$ on $\bb{T}^2$ such that the support of $\mu$ is contained in the sub-circle
$\{(w,z)\in\bb{T}^2:w\in\bb{T}\}$.

It is not hard to see that the map $z\mapsto\{(w,z)\in\bb{T}^2:w\in\bb{T}\}$ from $\bb{T}$ into $\f{S}_\r{cl}\bb{T}^2$ is continuous
with Vietoris topology. So it is natural to ask the following questions.
\begin{question}
Is the map $z\mapsto\bb{T}_{\theta,z}$ ($\theta\neq0$) from $\bb{T}$ into $\f{S}_\r{cl}\bb{T}^2_\theta$ continuous?
Is the family of $v$-sub-circles in $\bb{T}^2_\theta$ compact or (path) connected?
\end{question}
If we have a (Riemannian) metric on $\bb{T}^2$ we can ask about the Hausdorff and infimum distances of sub-circles.
Analogously we have the following problem.
\begin{problem}
Consider $\bb{T}^2_\theta$ as a C*-algebraic quantum metric space described in \cite{Rieffel1,Rieffel2} and find the Hausdorff and
infimum distances between two arbitrary sub-circles $\bb{T}_{\theta,z}$ and $\bb{T}_{\theta,z'}$.
\end{problem}
\bibliographystyle{amsplain}

\end{document}